\theoremstyle{thmstyleone}%
\theoremstyle{thmstyletwo}%
\theoremstyle{thmstylethree}%
\newcommand{\vek}[1]{{\mathbf #1}}
\newcommand{\sv}[1]{\boldsymbol{#1}}
\newcommand{\play}[1]{^{(#1)}}
\newcommand{\Tplay}[1]{^{(#1)^\mathsf{T}}}
\newcommand{\Invplay}[1]{^{(#1)-1}}
\newcommand{\Splay}[1]{^{*(#1)}}
\newcommand{\eg}{e.g.~}
\newtheorem{deff}{Definition}
\newtheorem{collar}{Remark}
\newtheorem{prob_stat}{Problem}
\newtheorem{lem}{Lemma}
\begin{document}
	
	\pagenumbering{gobble} 
	
	\title[Identification Methods for Ordinal Potential Differential Games]{Identification Methods for Ordinal Potential Differential Games}

	\author*{\sur{Balint} \fnm{Varga}$^*$ \orcidlink{0000-0002-4189-0105}} \email{balint.varga2@kit.edu}

	\author{\sur{Da} \fnm{Huang}}
	
	\author{\sur{S\"oren} \fnm{Hohmann}}
	
	\affil{\orgdiv{ Institute of Control Systems (IRS)}, \orgname{Karlsruhe Institute of Technology}, \orgaddress{\city{Karlsruhe}, \postcode{76131}, \country{Germany}}}
	

	\abstract{
		This paper introduces two new identification methods for linear quadratic (LQ) ordinal potential differential games (OPDGs). Potential games are notable for their benefits, such as the computability and guaranteed existence of Nash Equilibria. While previous research has analyzed ordinal potential static games, their applicability to various engineering applications remains limited. Despite the earlier introduction of OPDGs, a systematic method for identifying a potential game for a given LQ differential game has not yet been developed. To address this gap, we propose two identification methods to provide the quadratic potential cost function for a given LQ differential game. Both methods are based on linear matrix inequalities (LMIs). The first method aims to minimize the condition number of the potential cost function's parameters, offering a faster and more precise technique compared to earlier solutions. In addition, we present an evaluation of the feasibility of the structural requirements of the system. The second method, with a less rigid formulation, can identify LQ OPDGs in cases where the first method fails. These novel identification methods are verified through simulations, demonstrating their advantages and potential in designing and analyzing cooperative control systems.
	}
	

	\keywords{Potential Games; Nash Equilibrium; LMI Optimization; Linear-Quadratic Differential Games; Ordinal Potential Differential Games}

	\maketitle

	\newpage
	\section{Introduction}
	Game theory is commonly used to model the interactions between rational agents~\cite{1998_DynamicNoncooperativeGame_basar,2005_LQDynamicOptimization_engwerda,2016_PotentialGameTheory_la}, which arises in a wide range of applications, such as modeling the behavior of companies in the stock market \cite{2001_GameTheoryBusiness_chatterjee}, solving routing problems in communication networks \cite{2006_AdaptiveChannelAllocation_nie}, and studying human-machine cooperation \cite{2014_NecessarySufficientConditions_flad}.
	Utilizing game theory's systematic modeling and design techniques, effective solutions for various applications have been analyzed and designed, see e.g.~\cite{2021_OnlineInverseLinearQuadratic_inga}. These applications often hinge on the strategic decisions of participants, making the equilibrium concepts crucial for predicting and understanding the outcomes of such interactions.
	
	One of the core equilibrium concepts of game theory is the so-called Nash Equilibrium (NE), cf.~\cite{1998_DynamicNoncooperativeGame_basar}. In an NE of a game, each player reaches their optimal utility, taking the optimal actions of all other players into account. If the players find themselves in a Nash Equilibrium, there is no rational reason for them to change their actions.
	
	The very first idea of a fictitious function replacing the original structure of a non-cooperative strategic game with $N$ players was given by \textit{Rosenthal} \cite{1973_ClassGamesPossessing_rosenthal}.
	Based on this idea, the formal definitions of potential games were first introduced by \textit{Moderer and Shapley} \cite{1996_PotentialGames_monderer}. The general concept of \textit{potential games} is presented visually in Figure~\ref{fig:PG_general_idae}: The original non-cooperative strategic game\footnote{Note it is assumed that the original game is given, therefore the term \textit{given game} is used interchangeably to emphasize this property of the original game.} with $N$ players and with their cost functions {$J\play{i},$~$\,$~$i=1...N$} are replaced with one single potential function. This potential function provides a single mapping of strategy space $\, \mathcal{U} =  \mathcal{U}\play{1} \times ... \times \mathcal{U}\play{N}$ of the original game to the real numbers 
	\begin{equation} \label{eq:chap4_pot_intro}
		J\play{p} : \mathcal{U} \rightarrow \mathbb{R},
	\end{equation}
	instead of $N$ mappings of the combined strategy set of the players to the set of real numbers
	\begin{equation} \label{eq:chap4_J_all_i}
		J\play{i} : \mathcal{U} \rightarrow \mathbb{R}, \forall i \in \mathcal{P},
	\end{equation}
	where $ \mathcal{U}\play{i}$ represents the strategy space of player $i$. Therefore, the potential function serves as a substitute model for the original game, while retaining all essential information of the original game. Intuitively, the Nash equilibrium (NE) of the non-cooperative strategic game can be more easily computed using the potential function~\eqref{eq:chap4_pot_intro} than the coupled optimization of \eqref{eq:chap4_J_all_i} of the original game. A further beneficial feature of potential games is that they possess at least one NE. Furthermore, if the potential function is strictly concave and bounded, the NE is unique. These advantageous properties make potential games a highly appealing tool for strategic game analysis, which leads to advantages in engineering application with multiple decision makers, cf.~\cite{2016_SurveyStaticDynamic_gonzalez-sanchez}, \cite[Section 2.2]{2016_PotentialGameTheory_la}. These properties and application domains make further research on potential games interesting for the research community.
	
	Potential games have been extensively analyzed in the literature~\cite{1997_CharacterizationOrdinalPotential_voorneveld, 1999_PotentialGamesPurely_kukushkin,2012_StateBasedPotential_marden}. However, the focus of these works is primarily on \textit{potential static games}, see \eg \cite{2018_PotentialGamesDesign_Cheng}, which do not incorporate underlying dynamical systems.
	In contrast, \textit{differential games} are particularly valuable for modeling and controlling cooperative systems in engineering applications, as they account for dynamic systems, see e.g.~\cite{2021_OnlineInverseLinearQuadratic_inga}. However, the existing literature on \textit{potential differential games} exclusively deals with \textit{exact potential differential games}, which have such a definition which limits their general applicability.
	
	As a result, exact potential differential games can only describe games with specific system structures or utility functions for the players. For instance, the cost functions of the players are symmetric \cite{2016_SurveyStaticDynamic_gonzalez-sanchez, 2018_PotentialDifferentialGames_fonseca-morales}. Consequently, a general usage of exact potential differential games can be limited.
	
	To allow a broader usage, the subclass of linear quadratic (LQ) \textit{ordinal potential differential games} (OPDGs) has been previously introduced in literature \cite{2021_PotentialDifferentialGames_varga}. An OPDG can be used to model various engineering problems, like resources management of power networks \cite{8329563, 2024_GametheoreticalParadigmCollaborative_ta} and multi-agent interactions \cite{10342328, 2024_IntAweDec_varga}. 
	The class of ordinal potential static games has been explored in the literature, including verification and identification methods for finite potential static games in \cite{2014_onFinPot_Cheng, 2018_PotentialGamesDesign_li}. However, a systematic identification method for OPDGs is still absent from the literature. The identification\footnote{In this paper, the term \textit{identification} refers to determining whether a given differential game can be classified as a potential differential game.} is a crucial step in the analysis and design of potential games, as it enables us to derive the potential function for a given game structure. The absence of a systematic identification method for OPDGs limits their applicability in engineering and other fields. Therefore, there is a need for a new identification method that can provide a potential function for a given differential game structure and extend the reach of OPDGs to various applications.

	\begin{figure}[t!]
		\centering
		\includegraphics[width=0.98\linewidth]{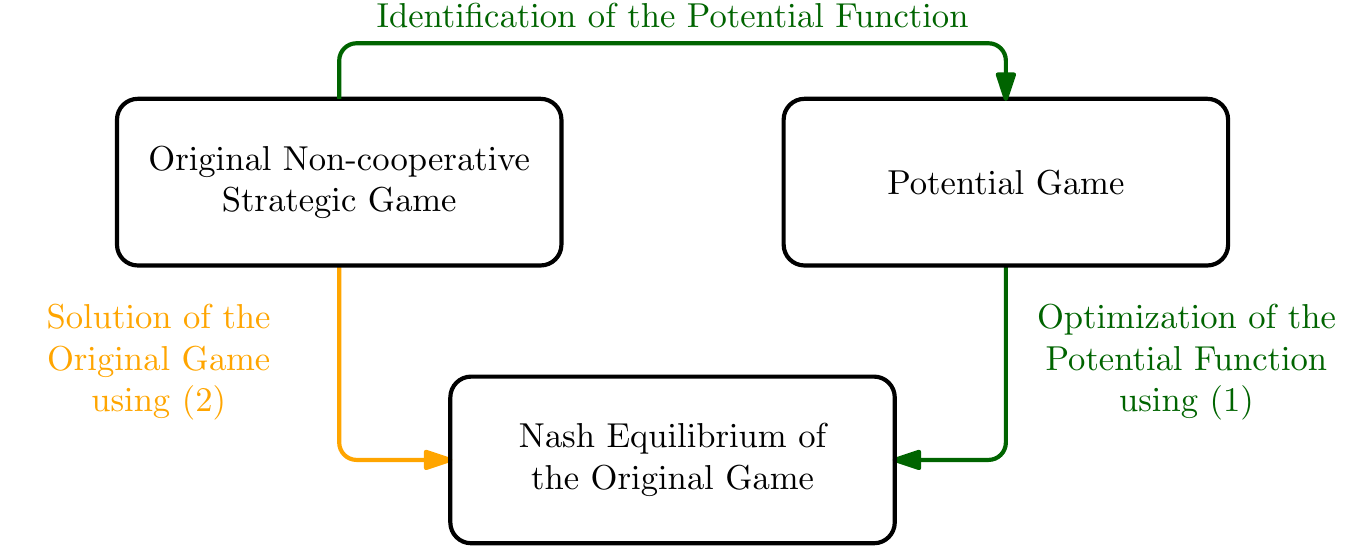}
		\caption{The illustration of the general idea of the potential games, where the original game is replaced by a (fictitious) potential function. The optimum of this potential function provides the NE of the original game.}
		\label{fig:PG_general_idae}
	\end{figure}
	
	Therefore, the contribution of this paper is the development of two identification methods to find the potential function of an OPDG: A potential function is identified for a given LQ differential game using linear matrix inequalities (LMI). Using an LMI provides a fast and accurate solution. The first identification method requires only the cost functions of the players and no information about the system state trajectories of the differential game is needed to determine the potential function. However, in some cases, the solution is not feasible since the constraints of the LMI are violated. Therefore, a further identification method is proposed, in which the constraints are softened. In order to construct the potential function using the second identification method, the system state trajectories of the differential game must be available. This has the disadvantage that the identification method is less robust against measurement noise of the trajectories compared to the first identification method. 
	
	The paper is structured as follows: Section II provides an overview of LQ differential games. Then, the identification methods, are given in Section III. Section~IV presents the application of the proposed identification methods to two examples using simulations. Finally, Section V offers a brief summary and outlook.

	\section{Preliminaries - Differential Games}
	First, this section provides a short overview of linear-quadratic (LQ) differential games. Then, the core idea of potential games and the class of the exact potential differential games are presented. Finally, the core notion of OPDGs is provided and the limitations of the state of the art are discussed
	\subsection{Linear-Quadratic Differential Games}
	
	The general idea of a game is that numerous players interact with each other and try to optimize their cost function\footnote{In literature of game theory, the formulation as a minimization problem is usual \cite[Chapter 1-3.]{2007_AlgorithmicGameTheory_nisan}. Since optimal control theory, and minimization problems are prevalent \cite{2015_OptimierungStatischeDynamische_papageorgiu}, the optimization problems in this paper are formulated as minimization.}. If the players also interact with a dynamics system, the game is called \textit{differential game}, see \cite{1998_DynamicNoncooperativeGame_basar, 2016_NonzeroSumDifferentialGames_basar}. In this case, the players have to carry out dynamic optimizations.
	
	Numerous practical engineering applications can often be characterized sufficiently with such LQ models, e.g.~\cite{2021_FeedbackSystemsIntroduction_astrom, 2021_InverseDynamicGame_ingacharaja, 2022_FeedbackControlSystems_rahmani-andebili}. An LQ differential game ($\Gamma_\text{LQ}$) is defined as a tuple~of:
	\begin{itemize}
		\item a set $\mathcal{P}$ of players $i=1...N$ with their cost functions $J\play{i}$,
		\item inputs of the players $\sv{u}(t) = \left[\sv{u}(t)\play{1}, ...,\sv{u}(t)\play{N}\right],$
		\item a dynamic system $\sv{f}(t,\sv{x}(t), \sv{u}(t)) = \dot{\sv{x}}(t)$,
		\item the system states $\sv{x}(t) \in \mathbb{R}$.
	\end{itemize}		
	The linear dynamic system is
	\begin{align} \label{eq:linear_system}
		\dot{\sv{x}} &= \mathbf{A}\sv{x}+ \sum_{i=1}^{N} \vek{B}\play{i}\sv{u}\play{i}, \\ \nonumber
		\sv{x}&(t_0) = \sv{x}_0,
	\end{align}
	where $\mathbf{A}$ and $\mathbf{B}^{(i)}$ are the system matrix and the input matrices of each player, respectively\footnote{Note that for the sake of simplicity, the time dependency of $\sv{x}(t)$ and $\sv{u}(t)$ are omitted in the following.}. Furthermore, the cost function of each player $i$ is given in a quadratic form:
	\begin{equation} \label{eq:each_player_quad_cost_function}
		J\play{i}=\frac{1}{2}\int_{t_0}^{\infty} \sv{x}^\mathsf{T} \vek{Q}\play{i}\sv{x} + \sum_{j=1}^{N} {\sv{u}\play{j}}^\mathsf{T} \vek{R}\play{ij}\sv{u}\play{j} \text{d}t,
	\end{equation}
	where matrices $\mathbf{Q}^{(i)}$ and $\mathbf{R}^{(ij)}$ denote the penalty factor on the state variables and on the inputs, respectively. The matrices $\mathbf{Q}^{(i)}$ are positive semi-definite and $\mathbf{R}^{(ij)}$ are positive definite.
	A common solution concept of games is the NE, see e.g.~\cite{2005_LQDynamicOptimization_engwerda}, which is defined in the following:
	\begin{deff} [Nash Equilibrium]
		The solution of 
		\begin{align} 
			\sv{u}\Splay{i} =  \underset{\sv{u}\play{i}}{\mathrm{arg min}} \;J\play{i}\left({\vek{u}\play{1}}^{*}, ...,\vek{u}\play{i}, ..., {\vek{u}\play{N}}^{*}  \right), \; \; \;  \forall i \in \mathcal{P}, \; \; \;  \; \rm{s.t.} \;  \text{(\ref{eq:linear_system})}
		\end{align}
		is called the Nash Equilibrium of the differential game composed by \eqref{eq:linear_system} and \eqref{eq:each_player_quad_cost_function}. 
	\end{deff}
	The NE is a stable solution of the game, i.e. if the players deviate from this equilibrium solution, they face higher costs.
	In LQ games, the NE can be computed by the coupled algebraic Riccati equations of the differential game. Therefore, the NE of the game can be characterized by the resulting players' inputs $\sv{u}\play{i}$. In case of a feedback structure, the control law $\sv{u}\play{i} = - \sv{K}\play{i}\sv{x}$ can be assumed. This feedback gain is computed by the algebraic Riccati equation (see e.g.~\cite{2005_LQDynamicOptimization_engwerda}.)
	\begin{align} \label{eq:coup_Ric}
		\vek{0} =  &\vek{A}^\mathsf{T} \vek{P}\play{i} +  \vek{P}\play{i} \vek{A} + \vek{Q}\play{i} -
		\sum_{j \in \mathcal{P}} \vek{P}\play{i}\vek{S}\play{j}\vek{P}\play{j} \\ \nonumber
		&-\sum_{j \in \mathcal{P}} \vek{P}\play{j}\vek{S}\play{j}\vek{P}\play{i} + 
		\sum_{j \in \mathcal{P}} \vek{P}\play{j}\vek{S}\play{ij}\vek{P}\play{j} , \; \forall i \in \mathcal{P},
	\end{align}
	where the following simplifications are applied
	\begin{align*}
		\vek{S}\play{j}  =& \vek{B}\play{j} {\vek{R}\play{jj}}^{-1} {\vek{B}\play{j}}^{T}	\; j \in \mathcal{P} \\
		\vek{S}\play{ij} =& \vek{B}\play{j} {\vek{R}\play{jj}}^{-1} \vek{R}\play{ij} {\vek{R}\play{jj}}^{-1} {\vek{B}\play{j}}^{T}\; j \in \mathcal{P}, i \neq j.
	\end{align*}
	From the solution $\vek{P}\play{i}$, the feedback gain of the players are computed
	\begin{equation}  \label{eq:KfromRicc_res}
		\vek{K}\play{i} = {\vek{R}\play{ii}}^{-1} \vek{B}\Tplay{i} \vek{P}\play{i}.
	\end{equation}
	Note that for LQ Differential Games, the solution $\vek{P}\play{i}$ and consequently the feedback gain $\vek{K}\play{i}$ are unique.  
	\subsection{Potential Differential Games}
	As mentioned in the introduction, potential games have a very useful property: The computation of the NE can be reduced to a single optimization problem of one cost (fictitious) function $J\play{p}$. Such a potential game can be considered as a substituting optimal controller of the associated LQ differential game $\Gamma_\text{LQ}$.
	The potential function is assumed to be quadratic
	\begin{equation} \label{eq:odpg_obj}
			J^{(p)}=\frac{1}{2} \int_{t_0}^{\infty} \sv{x}^{T} \mathbf{Q}^{(p)} \sv{x}+\sv{u}^{T} \mathbf{R}^{(p)} \sv{u} \, \mathrm{d} t
	\end{equation}
	where the matrices $\mathbf{Q}^{(p)}$ are positive semi-definite and $\mathbf{R}^{(p)}$ are positive definite, respectively. The vector $\sv{u}= \left[\sv{u}\play{1}, ...,\sv{u}\play{N}\right]$ involves all the players' inputs.	In an LQ case, the Hamiltonian of the potential function is
	\begin{equation} \label{eq:hamiltionian_of_pot_game}
		H\play{p} = \frac{1}{2} \sv{x}^{T} \mathbf{Q}^{(p)} \sv{x}+\frac{1}{2} \sv{u}^{T} \mathbf{R}^{(p)} \sv{u}+\sv{\lambda}^{(p)^{T}} \dot{\sv{x}} 
	\end{equation}
	and the Hamiltonian of the player $i$ of $\Gamma_\text{LQ}$ is
	\begin{align} \label{eq:hamiltionian_of_player_i}
		H\play{i} = \frac{1}{2}  \sv{x}^\mathsf{T} \vek{Q}\play{i}\sv{x} + \frac{1}{2} \displaystyle  \sum_{j=1}^{N} {\sv{u}\play{j}}^\mathsf{T} \vek{R}\play{ij}\sv{u}\play{j} 	+\sv{\lambda}^{(i)^{T}} \! \! \dot{\sv{x}} \; \; \forall i \in \mathcal{P}.
	\end{align}
	
	\begin{deff} [Exact Potential Differential Game \cite{1996_PotentialGames_monderer}] \label{def:EPDG}
		An exact potential differential game is a game, in which 
		\begin{align} \label{eq:def_pot_game}
			\frac{\partial H^{(p)}}{\partial \boldsymbol{u}^{(i)}}=\frac{\partial H^{(i)}}{\partial \boldsymbol{u}^{(i)}} \; \; \forall i \in \mathcal{P},
		\end{align}
		holds, where $H\play{p}$ and $H\play{i}$ are given in (\ref{eq:hamiltionian_of_pot_game}) and in (\ref{eq:hamiltionian_of_player_i}), respectively. 
	\end{deff}

A valuable property of an exact potential differential game that if (\ref{eq:def_pot_game}) holds, then optimization
\begin{align} \label{eq:EPDG_optim}
	\sv{u} =&  \underset{\sv{u}}{\mathrm{arg min}} \;J\play{p}, \\ \nonumber
	& \rm{s.t.} \; \text{(\ref{eq:linear_system})},
\end{align}
yields the NE of $\Gamma_{\text{LQ}}$. Thus, the computation of the NE can happen by the (\ref{eq:odpg_obj}) instead of \eqref{eq:coup_Ric}.

	In \cite{2016_SurveyStaticDynamic_gonzalez-sanchez} and \cite{2018_PotentialDifferentialGames_fonseca-morales}, exact potential differential games are analyzed, and their identification methods are provided on how to find the exact potential function of a given differential game. Using the optimization \eqref{eq:EPDG_optim}, the stabilizing feedback control law 
	\begin{equation}
		\sv{u} = \vek{K}\play{p}\sv{x}
	\end{equation}
	of the potential games is obtained. The feedback gain is
	\begin{equation}
		\vek{K}\play{p} = {\vek{R}\play{p}}^{-1}\vek{B}\play{p} \vek{P}\play{p},
	\end{equation}
	where $\vek{B}\play{p} = \left[\vek{B}\play{1}, \vek{B}\play{2}, ..., \vek{B}\play{N} \right]$. $\vek{P}\play{p}$ is the solution of the Riccati equation obtained from the optimization of \eqref{eq:odpg_obj}.
	\vspace*{5mm}	
	\subsection{Ordinal Potential Differential Games}
	The main drawback of exact potential differential games is their limited applicability for general engineering problems, see e.g.~\cite{2016_DynamicPotentialGames_zazo}. Due to its definition, exact potential differential games can be solely applied to games 
	\begin{itemize}
		\item with special system dynamics (e.g.~$\sv{u}\play{i}$ has only impact on $\sv{x}_i$, see Assumptions Theorem~1 and Corollary~1 from \cite{2018_PotentialDifferentialGames_fonseca-morales}) or
		\item for which the cost functions of the players have a particular structure: E.g.~elements in main diagonal are identical for all players, see~\cite{2016_SurveyStaticDynamic_gonzalez-sanchez} or Example 4 in~\cite{2018_PotentialDifferentialGames_fonseca-morales}.
	\end{itemize}
	To allow a broader usage, the subclass of OPDGs has been introduced in~\cite{2021_PotentialDifferentialGames_varga}. 
	\begin{deff}[Ordinal Potential Differential Game \cite{2021_PotentialDifferentialGames_varga}]
		If there exists an ordinal potential cost function $J\play{p}$ for a given $\Gamma_\text{LQ}$, for which
		\begin{equation} \label{eq:opdg_def}
			{\rm sgn}_\mathrm{v}\left(\frac{\partial H^{(p)}}{\partial \boldsymbol{u}^{(i)}}\right)
			= 
			{\rm sgn}_\mathrm{v}\left(\frac{\partial H^{(i)}}{\partial \boldsymbol{u}^{(i)}}\right)\; \; \forall i \in \mathcal{P},
		\end{equation}
		holds, then the game is an OPDG. In \eqref{eq:opdg_def}, ${\rm sgn}_\mathrm{v}$ of the vector $\sv{a} = [a_1, a_2, a_3]^T$ is defined such as
		\begin{equation*}
			{\rm sgn}_\mathrm{v}\left(\sv{a}\right) := \begin{bmatrix}
				\mathrm{sgn}(a_1) \\
				\mathrm{sgn}(a_2) \\
				\mathrm{sgn}(a_3)
			\end{bmatrix}
		\end{equation*}
		the element-wise sign function of the vector elements.
	\end{deff}
	 While the core idea of the subclass of OPDGs has been presented, still, a systematic identification method for OPDGs is still absent from the literature, which is a crucial step in the analysis and design of potential games. Thus, Problem \ref{prob:1} is defined as follows:
	\begin{prob_stat} \label{prob:1}
		Let an LQ differential game $\Gamma_\text{LQ}$ be given. How can be identified an OPDG for this associated LQ differential game, which is characterized by a quadratic potential function $J\play{p}$, as given in (\ref{eq:odpg_obj})?  
	\end{prob_stat}

	\section{Identification of OPDG}
	In this section, the main results of the paper are presented: Two novel identification methods to identify an OPDG for a given LQ differential game. 
	\subsection{Trajectory-free LMI Optimization}
	The identification method presented in this section is referred to as \textit{Trajectory-Free Optimization} (TFO).
	The TFO identifies an ordinal potential LQ differential game by solving an LMI optimization problem based on the idea from \cite{2015_SolutionsInverseLQR_priess}. This has the advantage that the ordinal potential cost function can be computed without determining the trajectories of the original game. This makes the optimization robust against measurement noise and disturbance.
	According to \cite{2021_PotentialDifferentialGames_varga}, the condition for an OPDG, cf.~(\ref{eq:opdg_def}), can be reformulated, leading to Lemma~1. 
	\begin{lem}[Sufficient Condition of an OPDG~\cite{2021_PotentialDifferentialGames_varga}] \label{thm-2} 
		If for a two-player linear-quadratic game,
		\begin{align} \label{eq:cond_ccat2021}
			\left( \vek{B}\Tplay{i}\vek{P}\play{p}\sv{x} \right)^\mathsf{T} \odot \left( \vek{B}\Tplay{i}\vek{P}\play{i}\sv{x}\right) \geq 0 
		\end{align}
		holds $\forall i \in \mathcal{P},$ and $\forall \sv{x}$, then it is an \textit{ordinal potential differential game} with the potential function given by (\ref{eq:odpg_obj}). The operation $\odot$ is the Schur product, defined as
		$$
		\begin{bmatrix}
			a_1 \\
			a_2 \\
			a_3
		\end{bmatrix}
	\odot
			\begin{bmatrix}
		b_1 \\
		b_2 \\
		b_3
	\end{bmatrix} = 		\begin{bmatrix}
	a_1 \cdot b_1\\
	a_2 \cdot b_2\\
	a_3 \cdot b_3
\end{bmatrix}
		$$
	\end{lem}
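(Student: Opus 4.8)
The plan is to start from the defining sign identity (\ref{eq:opdg_def}) of an OPDG and show that, for the quadratic data at hand, both sides collapse to state-linear vectors whose componentwise sign agreement is exactly the Hadamard non-negativity asserted in (\ref{eq:cond_ccat2021}). First I would differentiate the two Hamiltonians (\ref{eq:hamiltionian_of_pot_game}) and (\ref{eq:hamiltionian_of_player_i}) with respect to $\sv{u}\play{i}$. In each case the costate term $\sv{\lambda}^{\mathsf{T}}\dot{\sv{x}}$ contributes $\vek{B}\Tplay{i}\sv{\lambda}$ through the input matrix of player $i$ in (\ref{eq:linear_system}), while the quadratic input penalties contribute the control-dependent parts: the $i$-th block of $\mathbf{R}^{(p)}\sv{u}$ for the potential Hamiltonian and $\vek{R}\play{ii}\sv{u}\play{i}$ for player $i$'s own Hamiltonian.

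Next I would insert the stationary costates. For an infinite-horizon LQ problem the optimal costate is linear in the state, so $\sv{\lambda}\play{p}=\vek{P}\play{p}\sv{x}$ and $\sv{\lambda}\play{i}=\vek{P}\play{i}\sv{x}$, with $\vek{P}\play{i}$ the Riccati solution feeding (\ref{eq:KfromRicc_res}). Substituting these gives $\partial H\play{p}/\partial\sv{u}\play{i}=[\mathbf{R}^{(p)}\sv{u}]_i+\vek{B}\Tplay{i}\vek{P}\play{p}\sv{x}$ and $\partial H\play{i}/\partial\sv{u}\play{i}=\vek{R}\play{ii}\sv{u}\play{i}+\vek{B}\Tplay{i}\vek{P}\play{i}\sv{x}$. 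Evaluating both gradients at the common input reference $\sv{u}=\vek{0}$ removes the control-penalty contributions simultaneously and isolates the state-driven (costate) parts $\vek{B}\Tplay{i}\vek{P}\play{p}\sv{x}$ and $\vek{B}\Tplay{i}\vek{P}\play{i}\sv{x}$; these vectors encode the descent direction each objective imposes from rest, and they are precisely the two factors appearing in (\ref{eq:cond_ccat2021}).

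The final step is the elementary componentwise equivalence: for real vectors $\sv{a}$ and $\sv{b}$, the requirement ${\rm sgn}_\mathrm{v}(\sv{a})={\rm sgn}_\mathrm{v}(\sv{b})$ amounts to $a_k b_k\ge 0$ for every index $k$, i.e. the two vectors are never strictly sign-opposite in any coordinate. Applying this with $\sv{a}=\vek{B}\Tplay{i}\vek{P}\play{p}\sv{x}$ and $\sv{b}=\vek{B}\Tplay{i}\vek{P}\play{i}\sv{x}$ rewrites the OPDG condition (\ref{eq:opdg_def}) as the entrywise Schur-product inequality (\ref{eq:cond_ccat2021}) (read componentwise). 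Imposing this for both players $i\in\mathcal{P}$ and for all $\sv{x}$ then certifies the game as an OPDG with the quadratic potential (\ref{eq:odpg_obj}); because $\ge 0$ tolerates the degenerate coordinates where one gradient vanishes, it is the weaker, LMI-compatible form, which is why the lemma phrases it as a \emph{sufficient} condition.

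I expect the main obstacle to be justifying the reduction of the full Hamiltonian gradient to its pure state-feedback part, that is, arguing cleanly that the control-penalty contributions do not affect the ordinal sign comparison. This rests on the linear costate relation $\sv{\lambda}=\vek{P}\sv{x}$ together with evaluation at the common reference $\sv{u}=\vek{0}$ (equivalently, on $\mathbf{R}^{(p)}$ sharing its $i$-th diagonal block with $\vek{R}\play{ii}$ so the control terms cancel). Without this structural alignment the sign of each combined expression would depend on $\sv{u}$, and the bilinear product condition in $\vek{P}\play{p}$ and $\vek{P}\play{i}$ would not emerge in the clean LMI-ready form stated here.
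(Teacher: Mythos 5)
The paper never actually proves this lemma---its ``proof'' is a pointer to the cited reference---so your derivation must stand on its own. Your Hamiltonian gradient computations and the costate substitutions $\sv{\lambda}\play{p}=\vek{P}\play{p}\sv{x}$, $\sv{\lambda}\play{i}=\vek{P}\play{i}\sv{x}$ are fine, but the two steps carrying the logical weight both have genuine gaps. First, your ``elementary componentwise equivalence'' is not an equivalence, and it fails in exactly the direction the lemma needs. From ${\rm sgn}_\mathrm{v}(\sv{a})={\rm sgn}_\mathrm{v}(\sv{b})$ one does get $a_kb_k\ge 0$ for all $k$, but the converse is false: $a_k=0$, $b_k\neq 0$ gives $a_kb_k=0$ while $\mathrm{sgn}(a_k)=0\neq\mathrm{sgn}(b_k)$. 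The lemma asserts that \eqref{eq:cond_ccat2021} is \emph{sufficient} for the defining condition \eqref{eq:opdg_def}, i.e.\ precisely the converse direction, so this is not pedantry. Your closing remark has the logic backwards: you call the product form ``weaker'' and offer that as the reason the lemma is phrased as a sufficient condition---a weaker condition cannot be sufficient for a stronger one. The only way to obtain the implication is to exploit the hypothesis you never actually use, namely that \eqref{eq:cond_ccat2021} holds \emph{for all} $\sv{x}$: if two linear functionals $\sv{v}_p^{\mathsf{T}}\sv{x}$ and $\sv{v}_i^{\mathsf{T}}\sv{x}$ satisfy $(\sv{v}_p^{\mathsf{T}}\sv{x})(\sv{v}_i^{\mathsf{T}}\sv{x})\ge 0$ for every $\sv{x}$, then $\sv{v}_p$ and $\sv{v}_i$ must be non-negatively parallel (otherwise some $\sv{x}$ produces strictly opposite signs), and---excluding the degenerate case where exactly one of them is the zero row---this yields sign agreement for all $\sv{x}$. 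This positive-parallelism argument, which is exactly what the paper itself distills out of the lemma in \eqref{eq:paralell_optim_1} with its requirement $\omega\play{i}_i>0$, is the missing core of the proof.

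Second, the reduction to $\sv{u}=\vek{0}$ is unjustified. Condition \eqref{eq:opdg_def} compares the full gradients, control terms included: $\partial H\play{p}/\partial\sv{u}\play{i}$ contains the $i$-th block of $\vek{R}\play{p}\sv{u}$ and $\partial H\play{i}/\partial\sv{u}\play{i}$ contains $\vek{R}\play{ii}\sv{u}\play{i}$. Sign agreement of the state-driven parts does not survive adding a common control term: with $a_k=0.1$, $b_k=10$, $c_k=-1$ one has $a_kb_k>0$ yet $(a_k+c_k)(b_k+c_k)<0$. So even under the structural assumption in your final parenthesis (the $i$-th block row of $\vek{R}\play{p}$ matching $\vek{R}\play{ii}$, which makes the two control terms identical), agreement at $\sv{u}=\vek{0}$ does not extend to other inputs. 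You correctly flag this as ``the main obstacle,'' but neither of your proposed fixes closes it. A complete proof must pin down where \eqref{eq:opdg_def} is required to hold (e.g.\ along the closed-loop trajectories of the game, where $\sv{u}$ is itself a linear state feedback and both gradients again reduce to expressions of the form $(\text{matrix})\cdot\sv{x}$) and then run the parallelism argument above there. As written, your argument establishes \eqref{eq:opdg_def} only on the slice $\sv{u}=\vek{0}$, which does not certify the game as an OPDG.
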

	\begin{proof}
		See \cite{2021_PotentialDifferentialGames_varga}.
	\end{proof}
	To verify whether the differential game is an OPDG, condition \eqref{eq:cond_ccat2021} has to be checked for all possible trajectories $\sv{x}$ of the original game. Consequently, using (\ref{eq:cond_ccat2021}) as the constraints of an optimization problem indicate a trajectory dependency of the identification. Therefore, in the following, the elimination of this trajectory dependency is discussed. First, the following notation is introduced:
	$$ 
	\vek{V}\play{p}:= \mathbf{B}^{(i)^{T}} \mathbf{P}^{(p)} \text{ and }\vek{V}\play{i}:= \mathbf{B}^{(i)^{T}} \mathbf{P}^{(i)},
	$$
	where $\vek{B}\play{i} \in \mathbb{R}^{p_i \times n}$, $\vek{P}\play{i}, \vek{P}\play{p} \in \mathbb{R}^{n \times n}$ consequently $\vek{V}\play{i}, \vek{V}\play{p} \in \mathbb{R}^{p_i \times n}$. Using this notation, (\ref{eq:cond_ccat2021}) can be rewritten as
	\begin{align}  \label{necessary_opdg_simplified}
		\left(\vek{V}\play{p} \sv{x} \right)^\mathsf{T} \odot \left(\vek{V}\play{i} \sv{x} \right)\geq \sv{0}.
	\end{align}
	In order to drop $\sv{x}$ in the constraint, we recall that (\ref{necessary_opdg_simplified}) must hold $\forall \sv{x}$, which means both terms in (\ref{necessary_opdg_simplified}) have the same sign regardless of the actual $\sv{x}$ leading to the new condition
	\begin{equation} \label{eq:paralell_optim_1}
		\sv{\omega}\play{i}\vek{V}\play{p}-\vek{V}\play{i} = \sv{0},
	\end{equation}
	where $\sv{\omega}\play{i} = \mathrm{diag}\left[\omega\play{i}_1, \omega\play{i}_2, ..., \omega\play{i}_n\right]$ is a arbitrary scaling matrix, in which $\omega\play{i}_i > 0$ holds. Note that \eqref{eq:paralell_optim_1} is a sufficient condition for \eqref{necessary_opdg_simplified} and not necessary, therefore, \eqref{eq:paralell_optim_1} is more restrictive than \eqref{necessary_opdg_simplified}.
	
	Figure~\ref{fig:method_1} represents the system state vector $\sv{x}$ in two different time instances $t_1$ and $t_2$. Furthermore, the matrices $\vek{V}\play{i}$ and $\vek{V}\play{p}$ are given cf.~\eqref{eq:paralell_optim_1}. In this three-dimensional example with scalar inputs ($\sv{x} \in \mathbb{R}^{3 \times 3}$ and $\vek{B}\play{i} \in \mathbb{R}^{1 \times 3}$), $\omega$ is a single scalar value and $\vek{V}\play{i}, \vek{V}\play{p}$ are vectors such as $\sv{v}\play{i}, \sv{v}\play{p}$, see Figure~\ref{fig:method_1}. In this example, $\sv{v}\play{i}, \sv{v}\play{p}$ have to show in the same direction in order to fulfill condition \eqref{eq:paralell_optim_1} for all time instances. In this example, intuitively, if $\sv{x}$ has a lower dimension than $\sv{v}\play{i}, \sv{v}\play{p}$ have, there are more possible combinations of $\sv{v}\play{p}$ and $\sv{v}\play{i}$, which fulfill condition \eqref{eq:paralell_optim_1}. For instance, if $\sv{x}$ is scalar and coincides with $\vek{e}^1$, then any arbitrary vectors in the plane $\vek{e}^2-\vek{e}^3$ fulfill condition \eqref{eq:paralell_optim_1}. This consideration holds for higher dimensions.

	To construct an LMI optimization for finding OPDG, the idea from \cite{2015_SolutionsInverseLQR_priess} is applied. This leads to a reformulation of the original problem statement (cf.~Problem \ref{prob:1}) such~as:
	\hspace*{1mm}
	\begin{prob_stat} \label{prob_modifed}
		Let a stabilizing feedback control $\sv{K}\play{p}$ and the system dynamics (\ref{eq:linear_system}) are given. Find (at least) one cost function \eqref{eq:odpg_obj} whose minimization with respect to the system dynamics leads the optimal control law $\sv{K}\play{p}$, such that \eqref{eq:opdg_def} holds.
	\end{prob_stat}
	\hspace*{1mm}
	\begin{collar}
		Problem \ref{prob_modifed} raises the uniqueness question: If parameters of (\ref{eq:odpg_obj}), $[\tilde{\vek{Q}}\play{p},\,\tilde{\vek{R}}\play{p}]$ solve Problem \ref{prob_modifed} then $[\gamma \cdot \tilde{\vek{Q}}\play{p},\,\gamma \cdot \tilde{\vek{R}}\play{p}]$ solve Problem 2 as well since they lead to the same optimal control law $\vek{K}\play{p}$, where $\gamma>0$ is a scalar. To overcome this issue and to ensure a unique solution further criteria must be provided.
	\end{collar}
	\begin{figure}[t!]
		\centering
		\includegraphics[width=0.83\linewidth]{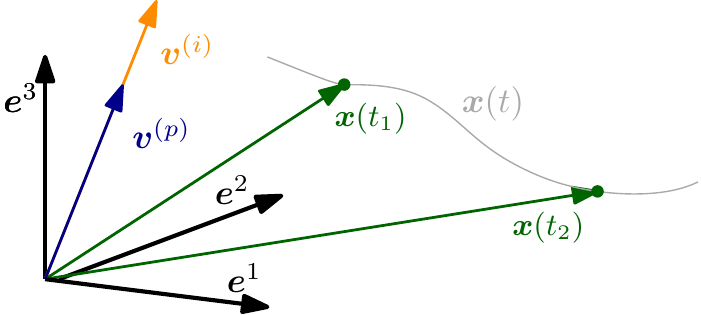}
		\caption{
			A schematic representation of the trajectory independency of the optimization in a three-dimensional space, where $\left(\sv{e}^1,\sv{e}^2,\sv{e}^3\right)$ are the basis vectors of the coordinate system. The vectors $\sv{v}\play{p}$ and $\sv{v}\play{i}$ show in the same direction and thus are linearly dependent. This means that the condition (\ref{necessary_opdg_simplified}) is automatically fulfilled at $t_1$ as well as at any other time $t_2$.}
		\label{fig:method_1}
	\end{figure}
	To achieve a unique solution for the associated inverse problem, we minimize the condition number $\alpha$ of the weighting matrices
	\begin{equation} \label{eq:cond_numb_def}
	\mathbf{I} \preceq
	\begin{bmatrix}
		\mathbf{Q}^{(p)} & \boldsymbol{0} \\
		\boldsymbol{0} & \mathbf{R}^{(p)}
	\end{bmatrix}
	\preceq \alpha \mathbf{I},
	\end{equation}
	thereby resolving the ambiguous scaling issue. The main difference to other inverse control problems (see \eg \cite{2015_SolutionsInverseLQR_priess} or \cite{2019_InverseDiscountedbasedLQR_el-hussieny}) is that (\ref{eq:cond_ccat2021}) or the reformulated condition \eqref{eq:paralell_optim_1} need to hold additionally. By summarizing, the following optimization problem is constructed:
	\newline
	\begin{subequations} \label{eq:optim1_lmi}
		\begin{align}
			\hat{\mathbf{P}}^{(p)},\hat{\mathbf{Q}}^{(p)}, \hat{\mathbf{R}}^{(p)}, \hat{\sv{\omega}}\play{i} &=\arg  \min _{\substack{\mathbf{P}^{(p)},\mathbf{Q}^{(p)}, \\  \mathbf{R}^{(p)}, \sv{\omega}\play{i}}}\alpha^{2} 
			\label{first_method_obj}\\
			{\rm s.t.} \quad \mathbf{A}^{T} \mathbf{P}^{(p)}+\mathbf{P}^{(p)} \mathbf{A}-\mathbf{P}^{(p)} \mathbf{B} \mathbf{K}\play{p}+\mathbf{Q}^{(p)}&=\mathbf{0} 
			\label{first_method_con1}\\
			\mathbf{B}^\mathsf{T} \mathbf{P}^{(p)}-\mathbf{R}^{(p)} \mathbf{K}\play{p} &= \mathbf{0} 
			\label{first_method_con2}\\
			\sv{\omega}\play{i} \mathbf{B}^{(i)^\mathsf{T}}\mathbf{P}^{(i)}-
			\mathbf{B}^{(i)^\mathsf{T}}\mathbf{P}^{(p)} &= \mathbf{0} \; \; \forall i\in \mathcal{P}
			\label{first_method_con3}\\
			\mathbf{I} \preceq
			\begin{bmatrix}
				\mathbf{Q}^{(p)} & \boldsymbol{0} \\
				\boldsymbol{0} & \mathbf{R}^{(p)}
			\end{bmatrix}
			&\preceq \alpha \mathbf{I} 
			\label{first_method_con4}\\
			\quad \mathbf{P}^{(p)} &\succeq \boldsymbol{0},
			\label{first_method_con5}
		\end{align}
	\end{subequations} 
	where $\vek{B} = [\vek{B}\play{1},...,\vek{B}\play{N}]$. The constraints (\ref{first_method_con1}), (\ref{first_method_con2}) and (\ref{first_method_con5}) are necessary that $\sv{K}\play{p}$ is optimal for the identified quadratic cost function. 
	The constraint (\ref{first_method_con4}) ensures the uniqueness of the solution, see \eqref{eq:cond_numb_def}.
	The constraint (\ref{first_method_con3}) restricts the identified cost function to an ordinal potential function. Thus, if \eqref{eq:optim1_lmi} is feasible, then Problem \ref{prob_modifed} is solved and the original game is an OPDG. Note since $\sv{\omega}\play{i}$ contains arbitrary values, it provides additional degrees of freedom for the optimization \eqref{eq:optim1_lmi}.
	\subsection{Feasibility Analysis}
	This section provides a feasibility analysis of \eqref{eq:optim1_lmi} for $\Gamma_{LQ}$ with two players. 
	Due to the constraints in \eqref{eq:optim1_lmi}, there are differential games, for which \eqref{eq:optim1_lmi} does not yield a feasible solution, since the constraints are violated: the solution is called \textit{non-feasible}. Thus, in the following, we discuss conditions for the feasibility of the proposed LMI optimization problem \eqref{eq:optim1_lmi}.
	
	Without loss of generality, it is assumed that the system has $n$ states and that the input matrices of player $1$ and player $2$, $\mathbf{B}^{(i)}, \; i=\{1,2\}$ have the dimensions $n \,\times\, p_1$ and $n \,\times\, p_2$, respectively. The TFO can provide feasible solutions if the following requirements are satisfied:
	\begin{lem}[Necessary Condition for the feasibility of the TFO for OPDGs] \label{lem:feas}
		The TFO (\ref{eq:optim1_lmi}) for two players can be feasible, only~if 
		\begin{itemize} \label{conditio:OPDG_LMI}
			\item[A)] the columns of the input matrices $\mathbf{B}^{(i)}, \; \forall i \in \mathcal{P}$ are linearly independent and 
			\item[B)] the system dimensions satisfy
			\begin{equation} \label{eq:lemma1_condition_opdg}
				\frac{1}{2}(1+n)-(p_1+p_2)>0,
			\end{equation}
			where $p_1$ and $p_2$ are the dimensions of the inputs vectors $\vek{B}\play{1} \in \mathbb{R}^{n \times p_1}$, $\vek{B}\play{2} \in \mathbb{R}^{n \times p_2}$ of player 1 and 2, respectively.
		\end{itemize}
	\end{lem}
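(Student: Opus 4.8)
The plan is to treat \eqref{eq:optim1_lmi} as a feasibility problem in the single genuinely free unknown $\vek{P}\play{p}$, and to show that each of the two stated requirements is forced by a different constraint block. First I would observe that, given the data $\vek{A},\vek{B},\vek{K}\play{p}$ and the Riccati solutions $\vek{P}\play{i}$ of \eqref{eq:coup_Ric}, the equalities \eqref{first_method_con1} and \eqref{first_method_con2} merely \emph{define} $\vek{Q}\play{p}$ and $\vek{R}\play{p}$ once a symmetric $\vek{P}\play{p}$ is fixed; hence the only structural restriction on $\vek{P}\play{p}$ is the ordinal constraint \eqref{first_method_con3}, while \eqref{first_method_con4} and \eqref{first_method_con5} impose the positivity and conditioning that the induced $\vek{Q}\play{p},\vek{R}\play{p},\vek{P}\play{p}$ must meet. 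Writing \eqref{first_method_con3} row by row, with $\vek{b}_k\play{i}$ the $k$-th column of $\vek{B}\play{i}$ and using the symmetry of $\vek{P}\play{i},\vek{P}\play{p}$, the constraint becomes $\vek{P}\play{p}\vek{b}_k\play{i}=\omega_k\play{i}\,\vek{P}\play{i}\vek{b}_k\play{i}$: the map $\vek{P}\play{p}$ must send each input column to a positive multiple of its image under $\vek{P}\play{i}$.

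For requirement A) I would argue by contraposition. Suppose the columns of some $\vek{B}\play{i}$ are linearly dependent, say $\sum_k c_k \vek{b}_k\play{i}=\sv{0}$ with $\sv{c}\neq\sv{0}$. Multiplying the row-wise form of \eqref{first_method_con3} by $c_k$ and summing gives $\vek{P}\play{p}\big(\sum_k c_k\vek{b}_k\play{i}\big)=\vek{P}\play{i}\big(\sum_k c_k\omega_k\play{i}\vek{b}_k\play{i}\big)$, whose left side vanishes; since the stabilizing $\vek{P}\play{i}$ is nonsingular this forces $\sum_k c_k\omega_k\play{i}\vek{b}_k\play{i}=\sv{0}$ as well. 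With $\omega_k\play{i}>0$ the two dependencies collapse the admissible scalings and, more decisively, drive $\vek{B}\Tplay{i}\vek{P}\play{p}$ to rank below $p_i$, so that in \eqref{first_method_con2} the corresponding block of $\vek{R}\play{p}$ cannot be positive definite, contradicting \eqref{first_method_con4}. Thus full column rank of every $\vek{B}\play{i}$ is necessary.

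For requirement B) the plan is a dimension count. The symmetric $\vek{P}\play{p}$ carries $\tfrac{1}{2}n(n+1)$ free entries, while \eqref{first_method_con3} stacked over both players is a system of $(p_1+p_2)\,n$ scalar linear equalities. Requiring strictly more free parameters than equalities, so that the solution set of \eqref{first_method_con3} keeps enough residual dimension to also meet the open conditions \eqref{first_method_con4}–\eqref{first_method_con5}, yields $\tfrac{1}{2}n(n+1)>(p_1+p_2)\,n$, which after dividing by $n>0$ is exactly \eqref{eq:lemma1_condition_opdg}. The main obstacle I expect is the bookkeeping of the scaling matrices $\sv{\omega}\play{i}$: they are additional unknowns inside \eqref{first_method_con3}, so I must pin down how they enter the count—showing that, because each $\omega_k\play{i}$ only rescales one fixed direction $\vek{P}\play{i}\vek{b}_k\play{i}$, the effective balance between unknowns and equalities still reproduces the threshold $\tfrac{1}{2}(1+n)>p_1+p_2$—and then argue that the counted equalities are generically independent and that the strict inequality (rather than ``$\ge$'') is what the positive-definiteness and bounded-condition-number requirements \eqref{first_method_con4}–\eqref{first_method_con5} genuinely force.
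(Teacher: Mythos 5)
Your part B is, in substance, the paper's own proof: the paper vectorizes \eqref{first_method_con3} into the Kronecker system \eqref{eq:chap4_proof_feasibly}, stacks the two players into an $n(p_1+p_2)\times n^2$ coefficient matrix, replaces $n^2$ by the $\tfrac{1}{2}n(1+n)$ free entries of the symmetric $\vek{P}\play{p}$, and demands an underdetermined system so that residual freedom remains for the other constraints; dividing by $n$ gives \eqref{eq:lemma1_condition_opdg}. The bookkeeping issue you raise about the unknowns $\sv{\omega}\play{i}$ is real, but the paper handles it exactly by \emph{not} handling it: it treats $\mathrm{vec}\left(\sv{\omega}\play{i}\vek{V}\play{i}\right)$ as a given right-hand side and counts only $\mathrm{vec}(\vek{P}\play{p})$ as unknown, so your planned refinement would go beyond the reference rather than fall short of it.

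Part A is where your proposal has a genuine gap. Your contraposition correctly derives, from \eqref{first_method_con3} and a dependency $\vek{B}\play{i}\sv{c}=\sv{0}$, $\sv{c}\neq\sv{0}$, that $\sum_k c_k\omega\play{i}_k\vek{b}\play{i}_k=\sv{0}$; but the concluding step --- that the rank deficiency of $\vek{B}\Tplay{i}\vek{P}\play{p}$ makes a positive definite $\vek{R}\play{p}$ impossible in \eqref{first_method_con2} --- does not follow. The gain $\vek{K}\play{p}$ in \eqref{first_method_con2} is the stacked Nash feedback of the original game, whose blocks are $\vek{K}\play{i}={\vek{R}\play{ii}}^{-1}\vek{B}\Tplay{i}\vek{P}\play{i}$ by \eqref{eq:KfromRicc_res}. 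Hence the very same dependency appears on the right-hand side: $\left(\vek{R}\play{ii}\sv{c}\right)^\mathsf{T}\vek{K}\play{i}=\sv{c}^\mathsf{T}\vek{B}\Tplay{i}\vek{P}\play{i}=\left(\vek{B}\play{i}\sv{c}\right)^\mathsf{T}\vek{P}\play{i}=\sv{0}$, so $\vek{K}\play{p}$ has a nontrivial left null space whenever the columns of $\vek{B}\play{i}$ are dependent. Left-multiplying \eqref{first_method_con2} by the zero-padded dependency vector $\tilde{\sv{c}}^\mathsf{T}$ therefore yields only the condition $\tilde{\sv{c}}^\mathsf{T}\vek{R}\play{p}\vek{K}\play{p}=\sv{0}$, i.e.\ that $\vek{R}\play{p}\tilde{\sv{c}}$ lie in that left null space --- a linear constraint that positive definite matrices satisfy without difficulty; the block-diagonal choice built from $\vek{R}\play{11}$ and $\vek{R}\play{22}$ already does. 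A rank count does not rescue the argument either, since $\mathrm{rank}\left(\vek{R}\play{p}\vek{K}\play{p}\right)=\mathrm{rank}\left(\vek{K}\play{p}\right)$, which is deficient by exactly the same amount as $\vek{B}^\mathsf{T}\vek{P}\play{p}$. So column dependence of $\vek{B}\play{i}$ creates no conflict between \eqref{first_method_con2} and \eqref{first_method_con4}, and your proof of A stalls.

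The paper proves A along a different line, which never touches \eqref{first_method_con2} or positive definiteness: it stays with the vectorized constraint \eqref{eq:chap4_proof_feasibly} and argues that column independence of $\vek{B}\play{i}$ is needed for the Rouch\'e--Capelli consistency condition $\mathrm{rank}\left(\vek{E}_n\otimes\vek{B}\Tplay{i}\right)=\mathrm{rank}\left(\vek{E}_n\otimes\vek{B}\Tplay{i}\,\big|\,\mathrm{vec}\left(\sv{\omega}\play{i}\vek{V}\play{i}\right)\right)$, inconsistency of \eqref{first_method_con3} rendering the LMI infeasible. If you want to repair your part A, the natural route is to reuse your own step $\sum_k c_k\omega\play{i}_k\vek{b}\play{i}_k=\sv{0}$ as a consistency analysis of \eqref{first_method_con3} in the paper's sense, rather than trying to extract a contradiction with \eqref{first_method_con4}; note, though, that this step also shows the right-hand side inherits the dependency for suitable $\sv{\omega}\play{i}$ (e.g.\ all $\omega\play{i}_k$ equal), so the consistency argument itself establishes necessity only in the guarded ``can be feasible only if'' sense in which the lemma is stated.
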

	
	\begin{proof}
		To prove the conditions, constraint (\ref{first_method_con3}) is rewritten as
		$$
		\sv{\omega}\play{i} \vek{V}\play{i}- \mathbf{B}^{(i)^\mathsf{T}}\mathbf{P}^{(p)} = \mathbf{0}, \forall i \in \mathcal{P},
		$$ 
		which can be vectorized such that
		\begin{align}  \label{eq:pr_step2}
			\mathrm{vec}\left(\sv{\omega}\play{i} \vek{V}\play{i}\right) - \mathrm{vec}\left({\vek{B}\play{i}}^\mathsf{T} \vek{P}\play{p}\right) = \sv{0}, \forall i \in \mathcal{P},
		\end{align}
		and rearranged to
		\begin{equation} \label{eq:chap4_proof_feasibly}
			\underbrace{\left(\vek{E}_n   \otimes {\vek{B}\play{i}}^\mathsf{T} \right)}_{\tilde{\vek{A}}} \underbrace{\mathrm{vec} \left(\vek{P}\play{p} \right)}_{\tilde{\vek{x}}} = \underbrace{\mathrm{vec}\left(\sv{\omega}\play{i} \vek{V}\play{i}\right)}_{\tilde{\vek{b}}}, \forall i \in \mathcal{P},
		\end{equation}
		where $\mathrm{vec}(\cdot)$ represents the column vectorization of a matrix, $\otimes$ is the Kronecker product of two matrices and the matrix $\vek{E}_n$ is a identity matrix with an appropriate dimension. In \eqref{eq:chap4_proof_feasibly}, the classical form of a system of linear equations $
		\tilde{\vek{A}} \tilde{\vek{x}} = \tilde{\vek{b}}
		$ is given in the underbraces. 
		
		Condition A is necessary for the consistency of the system of linear equations \eqref{eq:chap4_proof_feasibly}, for which
		$$
		\mathrm{rank} \left(\vek{E}_{n}   \otimes {\vek{B}\play{i}}^\mathsf{T} \right) = 	\mathrm{rank} \left(\vek{E}_{n}   \otimes {\vek{B}\play{i}}^\mathsf{T} \, \Big| \,  \mathrm{vec}\left(\sv{\omega}\play{i} \vek{V}\play{i}\right) \right), \; \forall i \in \mathcal{P}
		$$
		must hold, since inconsistency of \eqref{first_method_con3} leads to an infeasible LMI.
		
		Condition B is necessary for the following reasons. 
		If \eqref{eq:chap4_proof_feasibly} yields a single solution for a given $\mathrm{vec}\left(\sv{\omega}\play{i} \vek{V}\play{i}\right)$, then \eqref{eq:chap4_proof_feasibly} completely determines $\mathrm{vec}(\vek{P}\play{p})$. Therefore, it is not possible to modify $\vek{P}\play{p}$ to satisfy (\ref{eq:optim1_lmi}b) and (\ref{eq:optim1_lmi}c), and as a result, (\ref{eq:optim1_lmi}) cannot be feasible. On the other hand, if \eqref{eq:chap4_proof_feasibly} has multiple solutions, the constraints of the optimization problem \eqref{eq:optim1_lmi} have additional degrees of freedom. This consideration requires a rank analysis of \eqref{eq:chap4_proof_feasibly} to show that Condition B holds, see e.g.~\cite[Chapter 5]{2014_LinearAlgebraMatrix_banerjee}. 
		Due to the fact that the columns of the input matrix $\mathbf{B}^{(i)}, \; \forall i \in \mathcal{P}$ are linearly independent, 
		\begin{equation} \label{eq:feas_rank_cond_fin}	
			\mathrm{rank} \left(\vek{E}_{n}   \otimes {\vek{B}\play{i}}^\mathsf{T} \right) < \mathrm{dim}\left( \mathrm{vec} \left(\vek{P}\play{p} \right) \right), \; \forall i \in \mathcal{P}
		\end{equation}
		must hold, where the dimension of a vector is denoted by $\mathrm{dim}$. Condition \eqref{eq:feas_rank_cond_fin} means that the number of rows of the coefficient matrix of \eqref{eq:chap4_proof_feasibly} need to be smaller than the number of columns, cf. \cite{2005_RowRankEquals_wardlaw, 2014_LinearAlgebraMatrix_banerjee}. For two players, \eqref{eq:chap4_proof_feasibly} leads to
		\begin{equation} \label{eq:2_player}			
			\tilde{\vek{A}} = 
			\begin{bmatrix}
				\vek{E}_n \otimes {\vek{B}\play{1}}^\mathsf{T} \\
				\vek{E}_n \otimes {\vek{B}\play{2}}^\mathsf{T}
			\end{bmatrix} \; \mathrm{and} \; \tilde{\vek{b}} = \begin{bmatrix}
				\sv{\omega}\play{1} \vek{V}\play{1} \\
				\sv{\omega}\play{2} \vek{V}\play{2}
			\end{bmatrix}.
		\end{equation} 
		The dimension of $\tilde{\vek{A}}$ is $n(p_1+p_2) \times n\cdot n,$ where $p_1$ and $p_2$ are the dimension of the input variables of player 1 and 2, respectively. If $\vek{P}\play{p}$ was not a symmetric matrix, the condition for a manifold of solutions would be $ n\cdot n > n(p_1+p_2)$. Due to the symmetric structure of $\vek{P}\play{p}$, the degrees of freedom of $\mathrm{vec}\left(\vek{P}\play{p}\right)$ are reduced to $\frac{1}{2}(1+n)n$, see \cite[Chapter 14]{2014_LinearAlgebraMatrix_banerjee}. Thus, condition \eqref{eq:feas_rank_cond_fin} is changed~to
		\begin{equation}
			\frac{1}{2}(1+n) > (p_1+p_2),
		\end{equation}
		which completes the proof.
		
	\end{proof}
	
	\begin{collar} \label{remark1}
		The constraint (\ref{first_method_con3}) is more restrictive as condition~(\ref{eq:opdg_def}) from the definition of an OPDG. Therefore, it is possible that the solution of (\ref{eq:optim1_lmi}) is infeasible while the original game admits being an OPDG. 
	\end{collar}
	
	\vspace*{2.5mm}
	\subsection{Weakly Trajectory-Dependent Optimization}
	If not all constraints of \eqref{eq:optim1_lmi} are fulfilled, the TFO yields an \textit{infeasible solution}. This can be the case, even if a potential function exists for the given differential game cf.~Remark~\ref{remark1}. To overcome this issue and to identify the potential game still, a second identification method is introduced, which uses only the relevant parts of the trajectory information for the constraints of the LMI. The optimization is called \textit{Weakly Trajectory-Dependent Optimization} (WTDO), since the constraints for an OPDG (\ref{eq:optim1_lmi}b~-~\ref{eq:optim1_lmi}f) are reformulated and the constraint on an exact solution to the Riccati equation \eqref{eq:coup_Ric} is softened. Instead of optimizing the condition number, the WTDO minimizes the remaining error of (\ref{eq:optim1_lmi}b). The constraints are reformulated such that the closest points around the zero crossing of~$\vek{B}\Tplay{i}\vek{P}\play{i}\sv{x}$, are computed, for which
	\begin{align}
		&\mathbf{B}^{(i)^{T}} \mathbf{P}^{(i)}\boldsymbol{x}^*_+ > \boldsymbol{0} \; \forall i \in \mathcal{P}, \\
		&\mathbf{B}^{(i)^{T}} \mathbf{P}^{(i)}\boldsymbol{x}^*_- < \boldsymbol{0} \; \forall i \in \mathcal{P}
	\end{align}
	hold, where $\boldsymbol{x}^*_+$ and $\boldsymbol{x}^*_-$ represent the limit points at the zero crossing of the trajectory $\sv{x}(t)$. This is a reasonable reformulation of constraints (\ref{eq:optim1_lmi}b), since the zero crossings --~the points, where the signs of \eqref{eq:opdg_def} change~-- are the points of interest to fulfill condition \eqref{eq:cond_ccat2021}. The constraint (\ref{first_method_con1}) from the TFO is softened and used for minimization\footnote{Note that reformulating hard constrained in a soft-constrained identification method is also used in literature, see \eg \cite{2020_InverseOpenLoopNoncooperative_molloya}.}. The WTDO is formulated as an LMI optimization,
	\begin{subequations}  \label{eq:optim2_lmi}
		\begin{align}
			\hat{\mathbf{P}}^{(p)},\hat{\mathbf{Q}}^{(p)},& \hat{\mathbf{R}}^{(p)}=\arg  \min _{\mathbf{P}^{(p)},\mathbf{Q}^{(p)}, \mathbf{R}^{(p)}}\eta^{2} \\
			&{\rm s.t.} \; \text{(\ref{first_method_con2}),} \text{ and (\ref{first_method_con5})}\\
			&\mathbf{I} \preceq
			\begin{bmatrix}
				\mathbf{Q}^{(p)} & \boldsymbol{0} \\
				\boldsymbol{0} & \mathbf{R}^{(p)}
			\end{bmatrix}\\
			&\mathbf{B}^{(i)^{T}}
			\mathbf{P}^{(p)}\boldsymbol{x}^*_+ > \boldsymbol{0} \; \forall i \in \mathcal{P},
			\label{second_method_con2}\\
			&\mathbf{B}^{(i)^{T}} \mathbf{P}^{(p)}\boldsymbol{x}^*_- < \boldsymbol{0} \; \forall i \in \mathcal{P},
			\label{second_method_con3}
		\end{align}
	\end{subequations} 
	where
	\begin{equation} \label{eq:trace_WTO}
		\eta = \text{tr}\left(\mathbf{A}^{T} \mathbf{P}^{(p)}+\mathbf{P}^{(p)} \mathbf{A}-\mathbf{P}^{(p)} \mathbf{B} \mathbf{K}\play{p}+\mathbf{Q}^{(p)}\right).
	\end{equation}
	Thus, the constraint (\ref{first_method_con3}) is changed to the optimization objective of the WTDO. Furthermore, (\ref{second_method_con2}) and (\ref{second_method_con3}) ensure the condition of the OPDGs. Since the WTDO is also formulated as an LMI, therefore an efficient calculation is guaranteed. Note that the usage of the trace \eqref{eq:trace_WTO} ensures the soft-constrained NE of the original game.
	
	\section{Applications}
	This section provides an academic and an engineering example to demonstrate the applicability of the proposed identification methods. Furthermore, the results are compared to the state-of-the-art solution.
	\subsection{Input Dependent Optimization}
	To establish a baseline for the analysis, we utilize the identification method proposed in \cite{2021_PotentialDifferentialGames_varga} and compare it with the two novel approaches. This state-of-the-art identification is referred to as \textit{Input-Dependent Optimization} (IDO).
	The IDO approach involves measuring the error between the inputs from the potential function and those from the original game ($\sv{x}^*$), which corresponds to its NE
	\begin{equation} \label{eq:error_optim}
		\sv{e}_{u} =  \sv{u}\play{p}(t,\sv{x},\vek{Q}\play{p},\vek{R}\play{p}) - \sv{u}(t,\sv{x}^*).
	\end{equation}
	To identify the parameters of the OPDG, this error is minimized, which is happens through the following optimization:
	\begin{subequations} \label{eq:optim_find_pot_games}
		\begin{align} 
			&\; \; \; \hat{\vek{Q}}\play{p}, \hat{\vek{R}}\play{p} = \text{arg } \underset{\vek{Q}\play{p},\vek{R}\play{p}}{\text{min }}  \left| 	\sv{e}_{u} \right|^2  \\ 
			\text{s.t. }&\vek{A}^\mathsf{T} \vek{P}\play{p} +  \vek{P}\play{p} \vek{A} + \vek{Q}\play{p} - \vek{P}\play{p}\vek{S}\play{p}\vek{P}\play{p} = \sv{0} \\
			&\left( \vek{B}\Tplay{i}\vek{P}\play{p}\sv{x} \right)^\mathsf{T} \cdot \left( \vek{B}\Tplay{i}\vek{P}\play{i}\sv{x}\right) \geq 0 \; \forall i \in \mathcal{P},
		\end{align}
	\end{subequations}
	where $\vek{S}\play{p} = \vek{B}\play{p}\vek{R}\Invplay{p}\vek{B}\Tplay{p}$.
	The minimization of (\ref{eq:optim_find_pot_games}a) ensures that the necessary condition OPDG cf.~\cite{2021_PotentialDifferentialGames_varga}. The constraint (\ref{eq:optim_find_pot_games}b) is necessary for the optimum of $J\play{p}$ meaning that $\sv{u}\play{p}$ is the result of the optimal control problem. The constraint (\ref{eq:optim_find_pot_games}c) guarantees the sufficient condition of Lemma~\ref{eq:cond_ccat2021}. 
	The optimizer is an interior-point optimizer algorithm provided by MATLAB \cite{MATLAB:R2019b_u5}.
	
	In the next section, two examples are presented. The first one is a general example: Neither the players' cost functions nor the system dynamics are special in contrast the the state-of-the-art examples. The second one is a specific engineering application, in which a human-machine interaction is modeled as an OPDG. 
	
	\subsection{General Example}
	\subsubsection{The LQ differential game}
	In the first example, a linear time-invariant model is used, in which the system and input matrices are the following:
	\begin{equation*}
		\mathbf{A} \! =\! \begin{bmatrix}
			-1.20 & 0.00 & 0.00 & 0.00 & 0.00 & 1.75 \\
			0.00 & 2.10 & 0.00 & 0.00 & 0.00 & 0.00 \\
			-1.00 & 0.00 & 2.95 & 0.00 & 0.00 & 0.00 \\
			0.00 & 0.00 & 0.00 & 2.05 & 0.00 & 1.50 \\
			2.00 & 0.00 & 0.00 & 0.00 & 1.00 & -4.15 \\
			0.00 & 0.00 & 0.00 & 0.00 & 0.00 & 1.85
		\end{bmatrix}\!, \;
		\mathbf{B}\play{1} =\! 
		\begin{bmatrix}
			1.0 & 1.0 \\
			3.0 & 0.0 \\
			2.1 & 4.0 \\
			0.0 & 0.0 \\
			0.1 & 2.0 \\
			1.0 & 0.9
		\end{bmatrix}
		\text{and} \,
		\mathbf{B}\play{2} = \!
		\begin{bmatrix}
			1.3 & 1.0 \\
			1.0 & -1.1 \\
			0.0 & 0.0 \\
			2.0 & -1.0 \\
			0.0 & -2.0 \\
			4.0 & 2.1
		\end{bmatrix}\hspace*{-1mm}.
	\end{equation*}
	Note that the system and input matrices
	Both players have a quadratic cost function, cf. (\ref{eq:each_player_quad_cost_function}). The penalty factors of the first player are
	\begin{align*}
		\mathbf{Q}^{(1)} &= \rm{diag}([10,4,2,3,4,4]),\nonumber \\
		\mathbf{R}^{(11)} &= \rm{diag}([1.5,1.0]), \; \nonumber 
		\mathbf{R}^{(12)} = \rm{diag}([0,0]) \nonumber 
	\end{align*}
	and the factors of the second player are
	\begin{align*}
		\mathbf{Q}^{(2)} &= \rm{diag}([8,1,5,1,3,2]), \nonumber \\ 
		\mathbf{R}^{(21)} &= \rm{diag}([0.1,0]), \; \nonumber
		\mathbf{R}^{(22)} = \rm{diag}([1,1]). \nonumber
	\end{align*}
	The initial values of the simulation are	$x_0 = [-0.5,\,-1.9,\,0.8, \, -0.6, \, 2.9, \,-0.1]^\mathsf{T}.$   
	The control laws of the players are computed by the coupled optimization of~(\ref{eq:each_player_quad_cost_function}), with $i=1,2$, in which the coupled algebraic Riccati equation is solved iteratively. The obtained feedback gains of the players are
	\begin{align*}
		\vek{K}_1 &=   \begin{bmatrix}
			-0.90 & 2.26 & 1.03 & -0.55 & -0.80 & 0.40 \\
			-2.94 & -1.04 & 3.91 & 1.43 & -0.81 & 0.89
		\end{bmatrix}, \\
		\vek{K}_2 &=   \begin{bmatrix}
			-0.92 & -0.25 & 0.69 & 2.71 & -1.44 & 2.04 \\
			-0.45 & -0.55 & 0.65 & -0.78 & -1.53 & 1.31
		\end{bmatrix}.
	\end{align*}
	
	\subsubsection{Results}
	For the evaluation and comparison of the identification methods, the error in the state trajectory is defined as 
	\begin{align} \label{eq:results_error}
		e^{x} \; &=\max \left\{e^{x_1}, {e^{x_2}} \ldots, e^{x_n}\right\}, \\ \nonumber
		e^{x_i} &=\left\|\frac{{\sv{x}}_{i}\play{p}}{\left\|{\sv{x}}_{i}\play{p}\right\|_{\max }}-\frac{{\sv{x}}_{i}^*}{\left\|{\sv{x}}_{i}\play{p}\right\|_{\max }}\right\|_{\max }, 
		\;\forall i \in\{1, \ldots, n\},
	\end{align}
	where $\sv{x}_{i}^*$ and $\sv{x}_{i}\play{p}$ are the trajectories generated by the original game (OG) and by the OPDG, respectively. Furthermore, the computation time necessary for the identification is used for the evaluation. The OG means that the inputs of the LQ differential game are computed by 
	$$
	\sv{u}\play{i}(t) = -\sv{K}\play{i}\sv{x}(t),
	$$
	where $\sv{K}\play{i}$ is obtained from \eqref{eq:KfromRicc_res}.
	
	The numerical results of $\vek{Q}\play{p}$ and $\vek{R}\play{p}$ using (\ref{eq:optim1_lmi}) are 
	\begin{equation*}
		\vek{Q}\play{p} = \begin{bmatrix}
			16.75 & -1.26 & -2.62 & -3.88 & 0.73 & 4.11 \\
			-1.26 & 5.16 & 1.17 & 0.70 & 0.56 & 0.85 \\
			-2.62 & 1.17 & 6.10 & 0.43 & -0.26 & -0.15 \\
			-3.88 & 0.70 & 0.43 & 6.72 & 0.56 & 1.91 \\
			0.73 & 0.56 & -0.26 & 0.56 & 11.21 & -1.02 \\
			4.11 & 0.85 & -0.15 & 1.91 & -1.02 & 2.87
		\end{bmatrix}
	\end{equation*}
	\begin{equation*}
		\vek{R}\play{p} =   
		\begin{bmatrix}
			2.12 & 0.39 & 0.32 & -0.08 \\
			0.39 & 2.06 & -0.07 & -0.10 \\
			0.32 & -0.07 & 3.22 & -0.87 \\
			-0.08 & -0.10 & -0.87 & 6.84
		\end{bmatrix}
	\end{equation*}
	Figure~\ref{fig:sys_states13} and Figure~\ref{fig:sys_states46} show the resulting trajectories of the original game, with two players (solid lines), and of the substituting potential game (dashed lines), which are the result of \eqref{eq:optim2_lmi}. It can be seen that the trajectories from the controller designed via the potential cost function deviate insignificantly.	
	\begin{figure}[b!]
		\centering
		\begin{subfigure}[t]{0.95\textwidth}
			\includegraphics{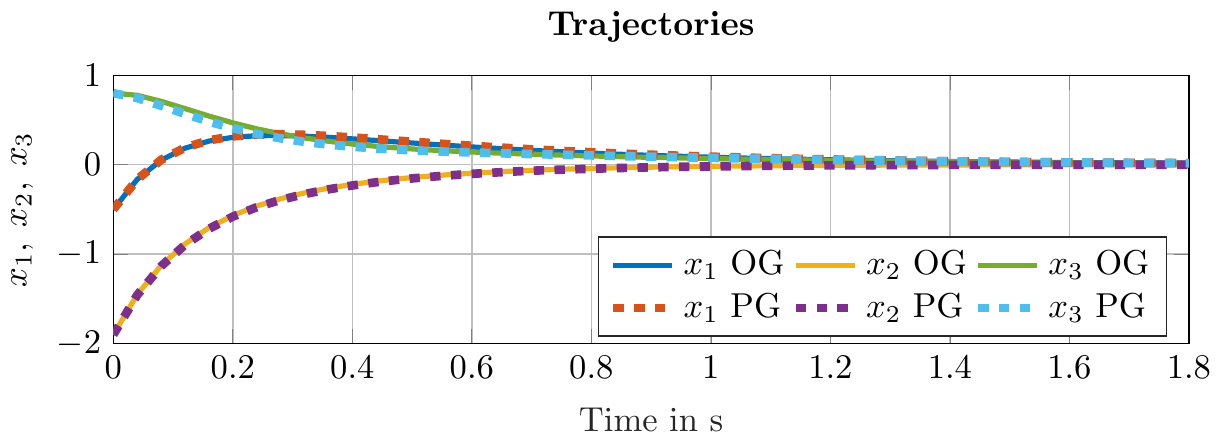}
			\caption{The 1.-3. system state trajectories}
			\label{fig:sys_states13}
		\end{subfigure}
		\hfill
		\begin{subfigure}[t]{0.95\textwidth}
			\includegraphics{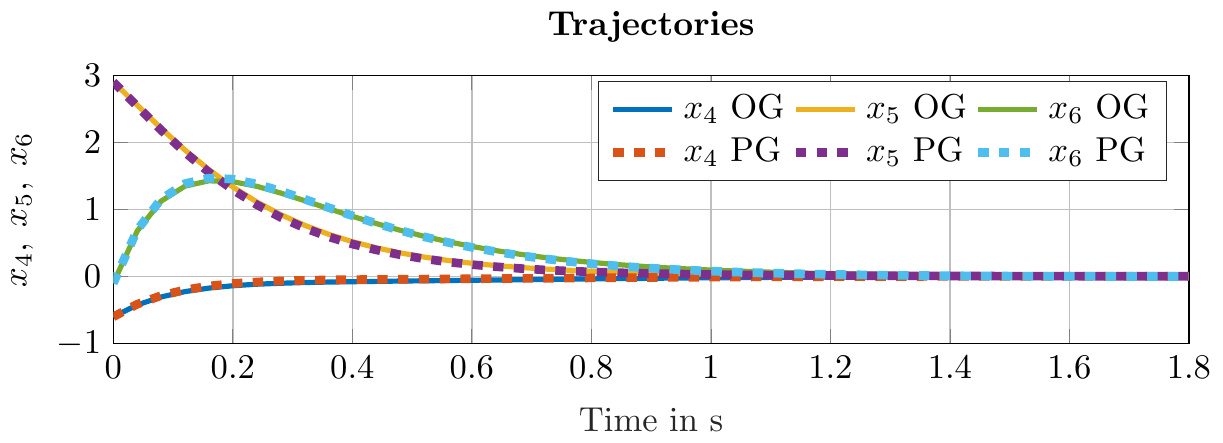}
			\caption{The 4.-6. system state trajectories}
			\label{fig:sys_states46}
		\end{subfigure}
		
		\caption{Comparison of the system state trajectories of the original game (solid lines) and the trajectories of the potential game (dashed lines)}
	\end{figure}

	\begin{figure}[t!]
		\centering 
		\includegraphics[width=0.99\linewidth]{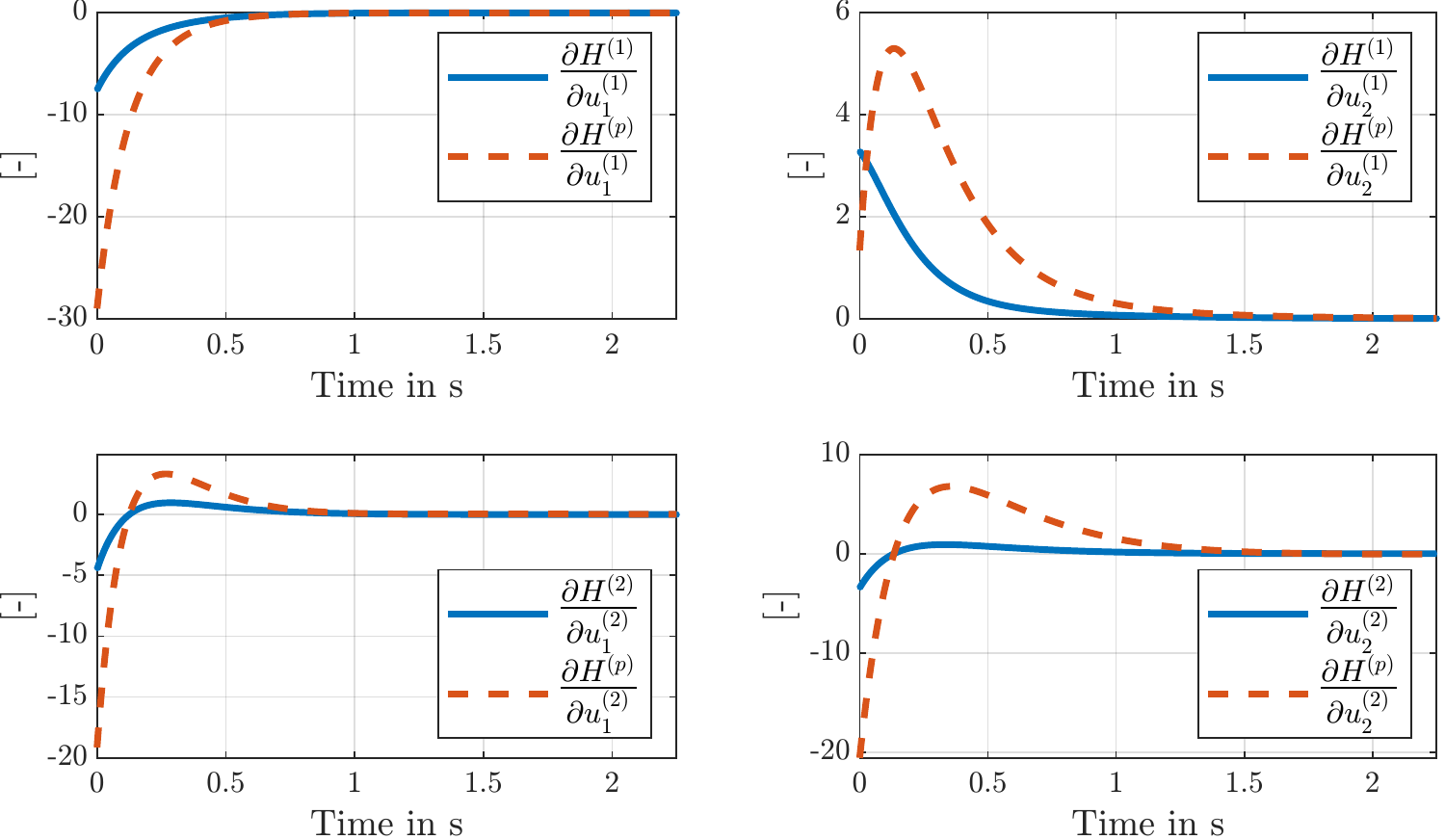}
		\caption{The derivatives of the Hamiltonian functions of the two players in accordance to \eqref{eq:opdg_def}}
		\label{fig:ham1}
	\end{figure}
	Figure~\ref{fig:ham1} shows the value of the derivatives of the Hamiltonian 
	$$\frac{\partial H^{(p)}}{\partial u^{(i)}_j} \; \mathrm{and} \; \frac{\partial H^{(i)}}{\partial u^{(i)}_j}.$$ 
	It can be seen that all the zero-crossing points for the OG and the identified OPDG occur at the same time. This verifies the ordinal potential structure of $\Gamma_{\text{LQ}}$ meaning that (\ref{eq:opdg_def}) holds $\forall t$.
	
	Table \ref{table:three2compare} compares the results of TFO and WTDO with IDO: It can be seen that the fastest and most accurate results are generated by the novel TFO. The trajectory errors $e^x$ are not different from IDO and WTDO. On the other hand, WTDO requires less computation time compared to state-of-the-art IDO. The two novel identification methods outperform the state-of-the-art solution. 
	
	\begin{table}[!h] 
		\centering
		\normalsize 
		\caption{The trajectory error and the computational times of the three identification methods}
		\begin{tabular}{c|ccc}
			
			& \; \; TFO \; \; & \; \; WTDO \; \; &\; \; IDO \; \;\\
			\hline
			\hline
			$e^x$ [-]& 0.019 &0.077 & 0.076\\
			\hline
			$t_\mathrm{comp}$ in s& 0.15 & 28.15 & 212.1
		\end{tabular}
		\label{table:three2compare}
	\end{table}

	\subsection{Engineering Example}
	The second simulation example is an engineering application with practical relevance, which presents the longitudinal model of a vehicle manipulator \cite{2019_ModelPredictiveControl_varga}. Such systems are used for road maintenance works, in which a human operator and the automation control the system.
	This human-machine interaction can be formulated as a differential game $\Gamma_{\text{LQ}}$ with a linear system and two players enabling a systematic controller design, which enables the interaction between humans and machines.
	
	\subsubsection{The two-player differential game}
	The model of this engineering example for the longitudinal motion of a vehicle manipulator has the following system and input matrices
	\begin{equation*}
		\mathbf{A}=  \begin{bmatrix}
			0.0 & 1.0 & 0.0 \\
			0.0 & 0.0 & 0.0 \\
			0.0 & 1.0 & 0.0
		\end{bmatrix}\!, \, \mathbf{B}^{(\mathrm{h})} = 
		\begin{bmatrix}
			0.00 \\
			0.00 \\
			0.14
		\end{bmatrix} \, \mathrm{and} \,
		\mathbf{B}^{(\mathrm{a})} = 
		\begin{bmatrix}
			0.0 \\
			1.0 \\
			0.0
		\end{bmatrix}.
	\end{equation*}
	The initial states are ${\sv{x}_0 = -1.2, \; -0.95, \; 0.5]}$. The penalty factor $\mathbf{Q}^{(i)}$ and $\mathbf{R}^{(i)}$ in the cost function (\ref{eq:each_player_quad_cost_function}) of each player are
	\begin{align*}
		\mathbf{Q}^{(\mathrm{h})} &= \rm{diag}([1,1,5]),\nonumber \\
		\mathbf{R}^{(\mathrm{h})} &= 1, \; \nonumber 
		\mathbf{R}^{(\mathrm{ha})} = 0.25 \\
		\mathbf{Q}^{(\mathrm{a})} &= \rm{diag}([0.344, \;    0.076, \;    1.409]), \nonumber \\ 
		\mathbf{R}^{(\mathrm{ah})} &= 0.19, \; \nonumber
		\mathbf{R}^{(\mathrm{a})} = 1. \nonumber
	\end{align*}
	The feedback control law of the original game is calculated \eqref{eq:each_player_quad_cost_function}, leading to the feedback gains of the human and automation
	\begin{equation}
		K\play{\mathrm{h}} =   \begin{bmatrix}
			-0.78 & 0.26 & 1.42
		\end{bmatrix}, \; \;   K\play{\mathrm{a}} =   \begin{bmatrix}
			0.42 & 1.59 & 0.83
		\end{bmatrix}.
	\end{equation}
	Since, \eqref{eq:lemma1_condition_opdg} does not hold for this model, the TFO cannot be applied. Thus, WTDO and IDO are used to compute the potential function of the game. 
	
	In order to analyze the robustness of the WTDO, white Gaussian noise 
	\begin{equation*}
		\tilde{\sv{x}}(t) = \sv{x}^*(t) + \sv{\sigma}(t)
	\end{equation*}
	is added to the state signal. 
	Enabling analysis in scenarios resembling real-world setups, this modification involves the inclusion of signal noises in the system states 
	Note that this procedure does not aim to provide a stochastic game analysis, since that would require different mathematical tools and methods. 
	The results obtained from WTDO, identifying noise states, are compared with the IDO results
	\subsubsection{Results}
	The identification with (\ref{eq:optim2_lmi}) lead to the following matrices of the potential function
	\begin{align}
		\vek{Q}\play{p} =   \begin{bmatrix}
			0.82 & 0.24 & -0.48 \\
			0.24 & 0.59 & -1.01 \\
			-0.48 & -1.01 & 2.15
		\end{bmatrix} \; \mathrm{and} \; \vek{R}\play{p} =   \begin{bmatrix}
			1.00 & -0.05 \\
			-0.05 & 1.60
		\end{bmatrix}
	\end{align}	
	To compare the performance and the robustness of the WTDO with IDO, their error indices \eqref{eq:results_error} are computed at different noise levels. The results are shown in Table~\ref{table:res_traj_noise}. Upon closer inspection of the results, it becomes apparent that the novel WTDO demonstrates superior performance compared to the state-of-the-art IDO at all signal noise levels. Figure~\ref{fig:sys_states2Ex} shows the system trajectories with $10\,$dB SNR. It can be seen that the WTDO still provides a reliable solution, even at the noise level of $10\,$dB. 
	
	\begin{table}[!t] 
		\normalsize 
		\caption{Results with different white Gaussian noise levels}
		\centering
		\begin{tabular}{c|ccccc}		
			SNR in dB&$10$ & $20$ & $30$ & $40$ & $\infty$\\
			\hline
			\hline
			$e^x_\mathrm{WTDO} \, [-]$ & 0.314& 0.107& 0.029& 0.027&0.002\\
			\hline
			$e^x_\mathrm{IDO} \, [-]$& 0.603& 0.265& 0.104& 0.047& 0.026
		\end{tabular}
		\label{table:res_traj_noise}
	\end{table}
	
	\begin{figure}[t!]
		\centering
		\includegraphics{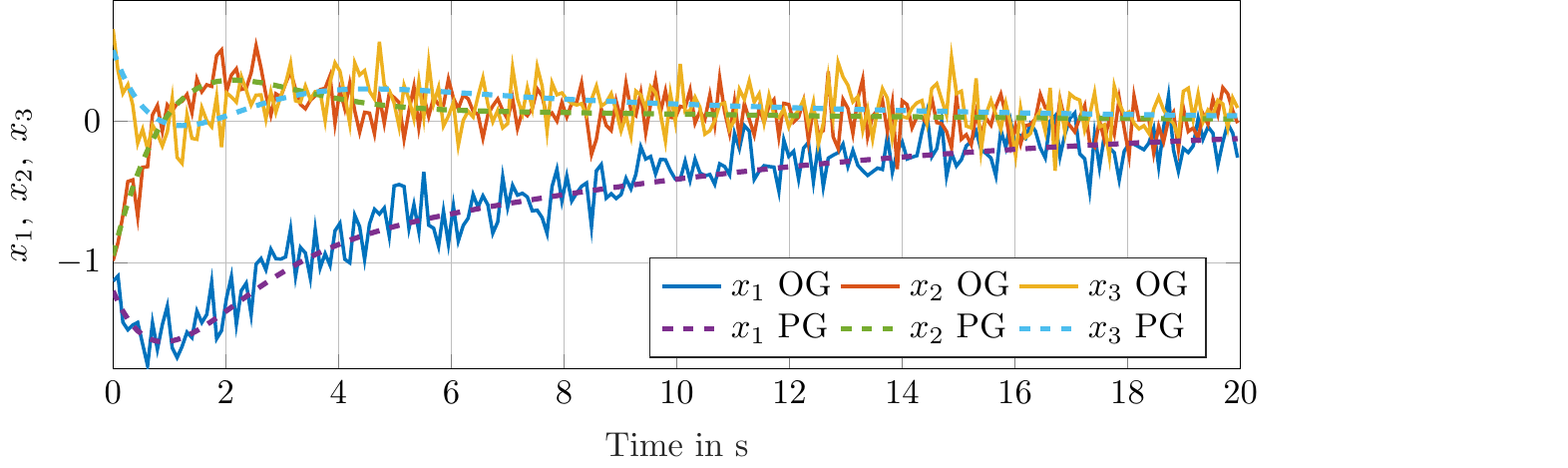}
		\caption{Comparison of the system state trajectories of the original game (solid lines) and the trajectories of the potential game (dashed lines)}
		\label{fig:sys_states2Ex}
	\end{figure}

	\subsection{Discussion}
	As the first simulation example showed, the TFO outperforms both WTDO and the state-of-the-art IDO. Both novel methods can provide an OPDG for the given differential game faster and more accurately compared to IDO. Furthermore, the second example explores the robustness of WTDO under varying noise levels. The findings demonstrate its potential for practical applications, such as modeling human-machine interaction as an OPDG, thereby indicating its suitability for real-world scenarios.
	
	However, a theoretical analysis of TFO regarding its computational complexity is not provided in this paper. Lemma \ref{lem:feas} provides only the necessary condition for the existence of an OPDG. Furthermore, the limitations of WTDO are not investigated in this work. Thus, these open research questions need to be addressed in further research.

	\section{Conclusion and Outlook}
	
	This paper has presented two systematic identification methods for finding an ordinal potential differential game corresponding to a given LQ differential game, addressing a gap in the existing literature. Both identification methods utilize linear matrix inequality optimization techniques. The first method -- referred to as trajectory-free optimization -- leverages only the cost functions of the original differential game to identify the ordinal potential game. In cases where trajectory-free optimization is infeasible, the second method, referred to as weakly trajectory-dependent optimization, is proposed as an alternative. Simulation results demonstrate that both identification methods effectively reconstruct the trajectories of the original game while satisfying the conditions of an ordinal potential differential game. Moreover, they exhibit superior speed, accuracy, and robustness compared to a previously proposed identification method from the literature.
	
	In future work, we plan to employ the proposed algorithms for designing cooperative learning controllers, see \eg \cite{2023_LimitedInformationShared_varga}. Additionally, we aim to validate the effectiveness of the proposed identification methods through measurements obtained from human-machine interactions.

	\section*{Declarations}
	\subsection*{Ethical Approval}
	Not applicable
	
	\subsection*{Competing Interests}
	The authors have no relevant financial or non-financial interests to disclose.

	\subsection*{Author Contributions}
	All authors contributed to the concept of the ordinal potential differential games. The implementation was carried out by Balint Varga and Da Huang. The first draft of the manuscript was written by Balint Varga. Da Huang and S\"oren Hohmann commented on previous versions of the manuscript. All authors read and approved the final manuscript.
	
	\subsection*{Founding}
	This work was supported by the Federal Ministry for Economic Affairs and Climate Action, in the New Vehicle and System Technologies research initiative with Project number 19A21008D.

	\subsection*{Availability of data and materials}
	The simulation results are available to readers on request to the corresponding author.


	

\end{document}